\newtheorem{defi}{Definition}
\newtheorem{cor}[defi]{Corollary}
\newtheorem{thr}[defi]{Theorem}
\newtheorem{lem}[defi]{Lemma}
\newtheorem{prop}[defi]{Proposition}
\newtheorem{remark}[defi]{Remark}
\newtheorem{claim}[defi]{Claim}
\newcommand*{\myproofname}{Proof}
\newenvironment{claimproof}[1][\myproofname]{\begin{proof}[#1]}{\end{proof}}
\def\C{\mathcal{C}}
\title{Extremal entropy for graphs with given size}
\author{
Stijn Cambie\thanks{
Mathematics Institute, University of Warwick, UK. E-mail: {\tt stijn.cambie@hotmail.com}. Part of this research was supported by the UK Research and Innovation Future Leaders Fellowship MR/S016325/1.}\and
Matteo Mazzamurro\thanks{
Department of Computer Science,
University of Warwick, UK. E-mail: {\tt  matteo.mazzamurro@warwick.ac.uk}. This paper is partly funded by: EPSRC Centre for Doctoral Training in Urban Science and Progress (EP/L016400/1), EPSRC DTP (EP/N509796/1).}
}
\begin{document}

\maketitle

\begin{abstract}
    The first degree-based entropy of a graph is the Shannon entropy of its degree sequence normalized by the degree sum. Its correct interpretation as a measure of uniformity of the degree sequence requires the determination of its extremal values given natural constraints. In this paper, we prove that the graphs with given size that minimize the first degree-based entropy are the colex graphs.
\end{abstract}

\section{Introduction}
Several graph invariants are finite sequences of non-negative integers. When normalized by its sum, a finite sequence can be thought of as a discrete probability distribution. Dehmer~\cite{dehmer2008information} proposed a general approach to study the Shannon entropy of probability distributions derived by normalising graph invariants, which he called information functionals. The Shannon entropy of an information functional is an intuitive measure of uniformity of the corresponding graph invariant. Yet, its careful and context-informed normalisation and interpretation poses some challenges. Determining the range of values the entropy can take is a non-trivial task, as it depends on the presence of structural constraints on the graph \cite{dehmer2012extremal}. A key step towards solving this issue is first and foremost the identification of the measure's extremal values for graphs satisfying natural constraints.

A large number of studies have tackled this problem when the information functional is given by powers $d^c$ of the vertex degrees $d$, normalized by their sum. Cao et al.~\cite{cao2014extremality} proved extremal properties for certain classes of graphs (trees, unicyclic, bicyclic, and chemical graphs of given order and size) for the special case of the first degree-based entropy, i.e., when the exponent of the degree powers is $c=1$. In~\cite{cao2015degree}, they extended their work to the case $c>1$ and provided bounds depending on the smallest and largest degree of the graph. The smallest and largest degree were similarly used in \cite{lu2015some} and \cite{lu2016new} to provide new bounds, which they proved via Jensen's inequality. Chen et al.~\cite{chen2015bounds}, instead, focused on the relation between the entropy and different values of the exponent $c$, providing numerical results for trees, unicyclic, bipartite and triangle-free graph with small number of vertices. Das and Dehmer~\cite{das2016conjecture} conjectured that the path graph maximizes the first degree-based entropy among trees, a result eventually proved in \cite{ilic2016extremal}. Ghalavand at al.~\cite{ghalavand2019first} focused again on the first degree-based entropy and used the Strong Mixing Variable method to extend maximality results for trees, unicyclic and bicyclic graphs, following an idea very similar to that used in \cite[Lemma~1]{cao2014extremality}. Yan~\cite{yan2021topological} focused on the first degree-based entropy for graphs with given order and size and proved that a graph whose degree sequence achieves minimum entropy must be a threshold graph (see Lemma~\ref{lem:extr_is_threshold_graph}). 
The same result holds true when only the graph size is fixed.

In this paper, we find the graphs yielding minimum and maximum first degree-based entropy among graphs with given size. We begin by formally introducing the definition of first degree-based entropy. 

\begin{defi}
    The first degree-based entropy of a graph $G$ with degree sequence $(d_i)_{1 \le i \le n}$ and size $m$ equals
    $$I(G)=-\sum_{i=1}^n \frac{d_i}{2m}\log\left( \frac{d_i}{2m} \right).$$
\end{defi}

By introducing the help function $h(G)=\sum_i d_i \log(d_i)=\sum_i f(d_i),$
    where $f(x)=x \log(x),$
    we have $I(G)=\log(2m)-\frac{1}{2m}h(G).$
    Hence, determining the minimum (resp. maximum) of $I(G)$ is equivalent to determining the maximum (resp. minimum) of $h(G).$


Among all graphs of size $m$, $m K_2$ is the (unique) graph attaining the maximum entropy, since $h(G) \ge 0=h(m K_2).$
The analogous question about determining the minimum entropy turns out to be harder. Here we prove that the colex graph $\C(m)$ is extremal.
The colex graph $\C(m)$ maximizes the number of triangles (and cliques) among all graphs with size $m$ as a corollary of the Kruskal-Katona theorem~\cite{K63,K66}, as observed in~\cite{KR}. Thus it can be considered as the most clustered graph and so a natural candidate for minimizing entropy (which is a measure of disorder).

When $m=\binom{k}{2}+\ell$ where $0 \le \ell <k$, the colex graph is the graph formed by connecting a vertex $v$ to $\ell$ vertices of a clique $K_k.$ An example has been depicted in Figure~\ref{fig:colexgraph} for $m=31,$ i.e. $k=8$ and $\ell=3.$

\begin{figure}[ht]
\centering
\begin{tikzpicture}{
\foreach \x in {0,45,...,315}{\draw[fill] (\x+22.5:1.2) circle (0.1);}
\foreach \x in {0,45,90}{
\draw(\x-22.5:1.2) -- (0:2.4);}
\foreach \x in {0,45,90,135}{\draw[thick] (\x+22.5:1.2) -- (\x+202.5:1.2);}
\foreach \x in {0,45,...,315}{\draw[thick] (\x+22.5:1.2) -- (\x+67.5:1.2);}
\draw[thick] (22.5:1.2) -- (112.5:1.2) -- (202.5:1.2) -- (292.5:1.2) -- cycle;
\draw[thick] (67.5:1.2) -- (157.5:1.2) -- (247.5:1.2) -- (337.5:1.2) -- cycle;
\draw[thick] (22.5:1.2) -- (157.5:1.2) -- (292.5:1.2) -- (67.5:1.2) -- (202.5:1.2) -- (337.5:1.2) -- (112.5:1.2) -- (247.5:1.2) -- cycle;
\draw[fill] (0:2.4) circle (0.1);}
\end{tikzpicture}
\caption{The graph $\C(31)$}\label{fig:colexgraph}
\end{figure}

Now we can formally state our main result.
\begin{thr}\label{thr:main}
    Among all graphs with size $m$, the colex graph $\C(m)$ maximizes $h(G)$ and therefore minimizes the entropy.
\end{thr}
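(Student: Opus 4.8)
The plan is to pass from $I$ to $h$ via $I(G)=\log(2m)-\frac{1}{2m}h(G)$, so that minimizing the entropy is exactly maximizing $h(G)=\sum_v f(d_v)$ with the convex function $f(x)=x\log x$. By Yan's result (Lemma~\ref{lem:extr_is_threshold_graph}) a minimizer may be assumed to be a threshold graph, and threshold graphs are precisely the unique realizations of their degree sequences (equivalently, the graphs with nested neighbourhoods); in particular $\C(m)$ is the only graph with its degree sequence. The overarching strategy is therefore to show that \emph{every} graph with $m$ edges whose degree sequence differs from that of $\C(m)$ admits a degree-sum-preserving local modification that strictly increases $h$. Since $h$ takes finitely many values and a maximizer exists, this forces the maximizer to have the colex degree sequence, hence to equal $\C(m)$.

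The engine of every such modification is the discrete increment $g(x):=f(x+1)-f(x)=(x+1)\log(x+1)-x\log x$. Convexity of $f$ makes $g$ strictly positive and strictly increasing, while $g'(x)=\log\frac{x+1}{x}$ shows $g$ is moreover strictly concave; both facts will be used. The first and easiest move is the single-edge shift: if $d_u\ge d_v$ and some $w\notin\{u,v\}$ has $vw\in E$, $uw\notin E$, then deleting $vw$ and inserting $uw$ changes $h$ by $g(d_u)-g(d_v-1)>0$. Hence at a maximizer no such shift exists, which forces nested neighbourhoods and so re-establishes that the maximizer is a threshold graph; this disposes of all non-threshold competitors at once.

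The crux is to isolate $\C(m)$ among threshold graphs, and here I expect a purely convexity/majorization argument to fail, so some care is needed. Indeed $\C(m)$ is \emph{not} the majorization maximum: the star $K_{1,m}$ has strictly larger maximum degree and is majorization-incomparable to $\C(m)$, yet $h(K_{1,m})=f(m)=m\log m$ is much smaller than $h(\C(m))\approx m\log(2m)$. What distinguishes the colex graph is that a clique spreads the degree sum $2m$ over many moderately large degrees rather than one huge degree, and this is exactly where the concavity of $g$ enters. Concretely, I would exhibit a consolidation $2$-switch for any threshold graph other than $\C(m)$, deleting an edge $ap$ with $d_a$ large and $d_p$ small and inserting an edge $qr$ between two small-degree vertices, so that the net change $g(d_q)+g(d_r)-g(d_a-1)-g(d_p-1)$ is positive because concavity lets two moderate increments outweigh one large one (precisely the phenomenon that turns $K_{1,m}$ into $\C(m)$ with an increase in $h$). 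A clean bookkeeping device is the conjugate (level) reformulation $h=\sum_{i\ge 0}g(i)N_i$, where $N_i=\#\{v:d_v>i\}$, $\sum_i N_i=2m$, and $N$ is non-increasing; maximizing this concave-weighted sum under the graphicality constraints on $N$ should drive the profile to the colex staircase $N=((k{+}1)^{\ell},k^{\,k-1-\ell},\ell)$ for $m=\binom{k}{2}+\ell$.

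The main obstacle is this last step in full generality: one must produce a strictly $h$-increasing modification for \emph{every} threshold graph different from $\C(m)$, not merely for the star, and the gain $g(d_q)+g(d_r)-g(d_a-1)-g(d_p-1)$ must be controlled across all threshold shapes (for instance the configuration with degree sequence $(5,3,3,3,1,1)$ has the same clique number as $\C(8)=(4,4,3,3,2)$ yet smaller $h$, so clique number alone is not the right handle — the concavity estimate is). I would organize this by a case analysis on the rigid threshold structure, or equivalently on the level profile $N$, showing that any $N$ other than the colex staircase can be pushed one step closer to it with $h$ increasing. Once such an improving move is guaranteed for all non-colex threshold graphs, no iteration or potential is needed: strict monotonicity of $h$ over the finite set of degree sequences, together with the uniqueness of $\C(m)$ as a realization of its own degree sequence, immediately identifies $\C(m)$ as the unique maximizer of $h$ and hence the unique minimizer of the first degree-based entropy.
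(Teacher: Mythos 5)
Your reduction is sound and matches the paper's: passing from $I$ to $h$, invoking Lemma~\ref{lem:extr_is_threshold_graph} to restrict to threshold graphs, and recognizing that the decisive tool is the concavity of the increment $g(x)=f(x+1)-f(x)$ rather than bare convexity/majorization (your observation that $K_{1,m}$ is majorization-incomparable to $\C(m)$ is exactly the right warning). But the proof stops where the real work begins. The entire content of the theorem, once one is down to threshold graphs, is the claim that every threshold graph other than $\C(m)$ admits a strictly $h$-increasing, size-preserving modification; you state this as something you ``would exhibit'' and ``would organize by a case analysis on the level profile,'' without carrying it out. This is not a routine verification: a local-move argument of the kind you sketch must also rule out non-colex \emph{local} maxima of $h$ under $2$-switches among threshold graphs (the existence of a global maximizer does not by itself guarantee that every non-maximizer has an improving move available), and the sign of $g(d_q)+g(d_r)-g(d_a-1)-g(d_p-1)$ genuinely depends on the shape of the threshold graph, as your own example with degree sequence $(5,3,3,3,1,1)$ versus $\C(8)$ illustrates.

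For comparison, the paper does not use local improving moves at all at this stage. It first proves (Theorem~\ref{thr:extremalthresholdgraph_givencliquenr}) by a double induction on the clique number $k$ and the size $m$ that $\C(m,k)$ maximizes $h$ among threshold graphs of clique number $k$ and size $m$: deleting a clique vertex $v$ of degree $k-1$ gives the recursion~\eqref{eq:hG_inductive-gen}, the term $\sum_j f(d_j)-\sum_j f(d_j-1)$ is handled by Lemma~\ref{lem:max_with_balanced} (Karamata applied to the concave increment — the same concavity you identified), and the induction hypothesis bounds $h(G\backslash v)$. It then needs the separate, genuinely computational Lemma~\ref{lem:largeclique} comparing $h(\C(m,k))$ with $h(\C(m,k-1))$ across clique numbers, whose proof occupies all of Section~\ref{sec:proof_comp_lemma} via three reduction claims. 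The cross-clique-number comparison is precisely the case your clique-number-agnostic switch argument would have to absorb, and it is the part that cannot be waved through: as written, your proposal has a genuine gap at its central step.
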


The main ideas for the proof are given in Section~\ref{sec:main}.
The proof relies on some known results, which are listed in Section~\ref{sec:def+res} and on a Lemma, verified in Section~\ref{sec:proof_comp_lemma}.

\section{Preliminary definitions and results}\label{sec:def+res}

In this section, we mention a few definitions and results from the past, which will be applied later. We start with the definition of majorizing sequences. When we speak about sequences majorizing each other, we always compare non-increasing sequences (equivalently, we order the sequence) of nonnegative reals. If necessary, we add zeros to be sure they are the same length.

\begin{defi}
    A sequence $(x_i)_{1 \le i \le n}$ majorizes the sequence $(y_i)_{1 \le i \le n}$ if and only if  $\sum_{1 \le i \le j } x_i \ge \sum_{1 \le i \le j } y_i$ for every $1 \le j \le n$ and equality does hold for $j=n$.
\end{defi}

This definition is used in the following useful inequality, which we immediately apply to the following corollary, lemma, and proposition. The corollary has also been observed in~\cite{E18}.

\begin{thr}[Karamata's inequality, \cite{Kar32}]\label{thr:Kar}
    Let $(x_i)_{1 \le i \le n}$ be a sequence majorizing the sequence $(y_i)_{1 \le i \le n}$. Then for every convex function $g$ we have 
	$\sum_{1 \le i \le n } g(x_i) \ge \sum_{1 \le i \le n } g(y_i).$ Furthermore this inequality is strict if the sequences are not equal and $g$ is a strictly convex function. For concave functions, the same holds with the opposite sign.
\end{thr}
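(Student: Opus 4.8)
The plan is to combine the tangent-line (subgradient) characterization of convexity with Abel summation (summation by parts), exploiting the two defining features of majorization: the partial sums of $(x_i)$ dominate those of $(y_i)$, and the two full sums agree. Throughout, I would write $X_j = \sum_{i=1}^j x_i$ and $Y_j = \sum_{i=1}^j y_i$, so that the hypotheses read $D_j := X_j - Y_j \ge 0$ for every $j$ and $D_n = 0$.

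First I would fix, for each index $i$, a subgradient $c_i$ of $g$ at the point $y_i$ (the derivative $g'(y_i)$ when $g$ is differentiable; any supporting slope otherwise, which exists since $g$ is convex). The tangent-line inequality gives $g(x_i) - g(y_i) \ge c_i (x_i - y_i)$ for every $i$, hence
$$\sum_{i=1}^n g(x_i) - \sum_{i=1}^n g(y_i) \ge \sum_{i=1}^n c_i (x_i - y_i).$$
The key structural observation is that the slopes are non-increasing in $i$: convexity forces any selection of subgradients to be non-decreasing along increasing arguments, and the sequence $(y_i)$ is non-increasing, so $c_i$ decreases with $i$.

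Next I would apply Abel summation to the right-hand side. Writing $x_i - y_i = D_i - D_{i-1}$ with $D_0 = 0$ and rearranging, one obtains
$$\sum_{i=1}^n c_i (x_i - y_i) = c_n D_n + \sum_{i=1}^{n-1} (c_i - c_{i+1}) D_i.$$
The boundary term $c_n D_n$ vanishes because $D_n = 0$; each factor $c_i - c_{i+1}$ is non-negative by the monotonicity of the slopes, and each $D_i$ is non-negative by majorization. Hence the whole expression is $\ge 0$, which proves $\sum g(x_i) \ge \sum g(y_i)$.

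For the strictness claim I would return to the tangent-line step: if $g$ is strictly convex, then $g(x_i) - g(y_i) > c_i (x_i - y_i)$ whenever $x_i \ne y_i$, and since the sequences are assumed distinct at least one such index exists, making the first displayed inequality strict. The concave case follows at once by applying the convex case to $-g$. I expect the main subtlety to be the non-differentiable case — ensuring that a consistent, monotone choice of subgradients $c_i$ can be made — together with getting the index bookkeeping in the summation by parts exactly right so that the signs of $c_i - c_{i+1}$ and $D_i$ align; the remainder is routine.
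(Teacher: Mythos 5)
Your argument is correct. Note, however, that the paper does not prove this statement at all: it is quoted as a classical result with a citation to Karamata's original work, so there is no in-paper proof to compare against. What you have written is the standard Abel-summation proof of the Hardy--Littlewood--P\'olya/Karamata inequality: the tangent-line bound $g(x_i)-g(y_i)\ge c_i(x_i-y_i)$, summation by parts against the partial-sum differences $D_i\ge 0$ with $D_n=0$, and monotonicity of the slopes $c_i$. The one subtlety you flag yourself is easily resolved: take $c_i$ to be the right derivative $g'_+(y_i)$, which exists everywhere for a convex $g$ and is non-decreasing in its argument, so $c_i$ is automatically non-increasing in $i$ (this also handles ties $y_i=y_{i+1}$ consistently). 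The strictness step is also sound, since for a strictly convex $g$ the supporting line at $y_i$ meets the graph only at $y_i$, and distinctness of the two sorted sequences guarantees an index with $x_i\ne y_i$. For the purposes of this paper only the convex case with $g(x)=x\log x$ (and the concave variant in Lemma~\ref{lem:max_with_balanced}) is used, and your proof covers both.
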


\begin{cor}\label{cor:majdeg_kar}
    Let $G$ and $G'$ be two graphs such that the degree sequence of $G$ majorizes the degree sequence of $G'$. 
    Then $h(G) \ge h(G').$
\end{cor}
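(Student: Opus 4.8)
The plan is to apply Karamata's inequality (Theorem~\ref{thr:Kar}) directly to the convex function $f(x)=x\log(x)$ that already appears in the definition of the help function, since we are given $h(G)=\sum_i f(d_i)$. Once we establish that $f$ is convex, the corollary follows simply by matching the hypothesis of Theorem~\ref{thr:Kar} to the majorization assumption on the two degree sequences.

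First I would verify that $f$ is convex: differentiating twice gives $f'(x)=\log(x)+1$ and $f''(x)=1/x>0$ for all $x>0$, so $f$ is in fact strictly convex on $(0,\infty)$. To accommodate the zero entries that the majorization convention introduces when the two degree sequences are padded to a common length, I would adopt the standard convention $f(0)=\lim_{x\to 0^+}x\log(x)=0$; with this value $f$ extends continuously to $[0,\infty)$ and remains convex there, so it is a legitimate input for Theorem~\ref{thr:Kar}. Letting $(d_i)_{1\le i\le n}$ and $(d_i')_{1\le i\le n}$ denote the degree sequences of $G$ and $G'$ arranged in non-increasing order and padded with zeros to a common length $n$, the hypothesis states precisely that $(d_i)$ majorizes $(d_i')$. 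Applying Theorem~\ref{thr:Kar} with $g=f$ then yields $\sum_{i=1}^n f(d_i)\ge \sum_{i=1}^n f(d_i')$, and since $f(0)=0$ the padded zeros contribute nothing to either side, so $h(G)\ge h(G')$.

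There is no substantial obstacle here, as this is essentially a direct specialization of Karamata's inequality; the only points demanding care are the convexity check and the consistent treatment of the boundary value $f(0)=0$ under padding. As a remark that may be useful later, because $f$ is strictly convex the inequality is strict whenever the two degree sequences actually differ, which will be relevant when arguing about the uniqueness of the extremal graph.
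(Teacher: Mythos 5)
Your proposal is correct and follows exactly the paper's route: the paper likewise deduces the corollary by applying Karamata's inequality (Theorem~\ref{thr:Kar}) after noting that $f(x)=x\log(x)$ is convex. Your additional care with the second-derivative check and the convention $f(0)=0$ for the zero-padded entries is a welcome elaboration of details the paper leaves implicit, but it does not change the argument.
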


\begin{proof}
    By Theorem~\ref{thr:Kar}, it is sufficient to note that $f(x)=x \log (x)$ is a convex function.
\end{proof}


\begin{lem}\label{lem:max_with_balanced}
    Let $t,\ell>0$ and 
    be fixed values.
    Then under the condition that $\sum_{j=1}^n z_i =t$ and all $z_i \ge 0$, $ \sum_{j=1}^n f(z_j+\ell) -\sum_{j=1}^n f(z_j)$ is maximized when $z_1=z_2=\ldots=z_n=\lfloor \frac tn \rceil.$
\end{lem}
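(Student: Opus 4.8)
The plan is to recognize the objective as a separable sum of a single-variable function and to reduce the statement to a concavity fact, after which the result follows immediately from the machinery already in place. Concretely, I would set $g(z) := f(z+\ell) - f(z)$, so that the quantity to be maximized is $\sum_{j=1}^n g(z_j)$ subject to $z_j \ge 0$ and $\sum_j z_j = t$. Since the constraint fixes the total, the natural comparison object is the balanced vector, and the claim becomes the assertion that a balanced assignment maximizes a separable concave sum.

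First I would verify the concavity of $g$. With $f(x) = x\log x$ we have $f'(x) = \log x + 1$ and $f''(x) = 1/x$, so that $g'(z) = \log\!\bigl(1 + \tfrac{\ell}{z}\bigr)$ and $g''(z) = f''(z+\ell) - f''(z) = \frac{1}{z+\ell} - \frac{1}{z} < 0$ for every $z > 0$. The point worth stressing is that, although $f$ itself is convex (which is what drives Corollary~\ref{cor:majdeg_kar}), its shifted difference $g$ is \emph{concave}; this happens precisely because $f''$ is strictly decreasing, i.e. $f''' < 0$, so that the forward difference of $f$ flattens out. As $g$ is continuous at the endpoint (using the convention $0\log 0 = 0$), it is strictly concave on all of $[0,\infty)$.

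Next I would invoke majorization. The constant vector $(t/n, \ldots, t/n)$ is majorized by every nonnegative length-$n$ vector with the same sum $t$, so applying Theorem~\ref{thr:Kar} to the concave function $g$ (the concave case recorded in its statement) gives $\sum_{j} g(z_j) \le n\, g(t/n)$, with equality if and only if all $z_j$ are equal. This identifies the maximizer as the balanced assignment. In the integer setting relevant to the application, where the $z_j$ and $t$ are integers, the identical argument applied to the most balanced integer vector (entries $\lfloor t/n\rfloor$ or $\lceil t/n\rceil$, collapsing to $\lfloor t/n\rceil$ when $n \mid t$) yields the stated conclusion, since that vector remains majorized by every competing integer vector with sum $t$.

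The hard part is not the majorization step, which is routine once concavity is in hand, but rather establishing concavity in the correct (reversed) direction and treating the endpoint $z=0$ carefully; it is tempting to be misled by the convexity of $f$ into expecting the balanced vector to \emph{minimize} rather than maximize the sum. A secondary point requiring care is the integrality: one must confirm that the balanced integer vector is genuinely the majorization-minimal element among nonnegative integer vectors of sum $t$, so that the concave Karamata bound is tight exactly there. The strict concavity of $g$ then also delivers uniqueness of the maximizer.
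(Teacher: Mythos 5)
Your proof is correct and follows essentially the same route as the paper: both define the difference function $g(z)=f(z+\ell)-f(z)$, observe it is strictly concave, note that the balanced vector is majorized by every competing vector of the same sum, and conclude by the concave case of Karamata's inequality (Theorem~\ref{thr:Kar}). You simply spell out the second-derivative computation and the endpoint/integrality details that the paper leaves implicit.
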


\begin{proof}
    Note that the function $\delta(z)=f(z+\ell) - f(z)$ is a strictly concave function for every $\ell>0$ and every sequence of $n$ integers with sum $t$ majorizes the sequence with $z_1=z_2=\ldots=z_n=\lfloor \frac tn \rceil.$
    Now the result follows immediately by Karamata's inequality.
\end{proof}

\begin{prop}\label{prop:basiccases_trees}
    Among trees of order $n$, $h(G)$ is maximized by the star $S_n$ and minimized by the path $P_n$.
   The second smallest value is attained by any tree with $3$ leaves.
\end{prop}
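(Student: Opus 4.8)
The plan is to reduce every assertion to a statement about majorization of degree sequences and then invoke Corollary~\ref{cor:majdeg_kar}. Recall that a tree on $n$ vertices has $n-1$ edges, so its degree sequence consists of $n$ positive integers summing to $2n-2$; moreover a tree always has at least two leaves, and it is a path exactly when it has exactly two leaves (equivalently, maximum degree at most $2$). The three extremal objects have easily identifiable degree sequences: $S_n$ has $(n-1,1,\dots,1)$, $P_n$ has $(2,\dots,2,1,1)$ with $n-2$ twos, and, using the tree identity $a_1 = 2 + \sum_{i\ge 3}(i-2)a_i$ (where $a_i$ counts the vertices of degree $i$), a tree with exactly three leaves satisfies $\sum_{i\ge 3}(i-2)a_i=1$, forcing the single common degree sequence $(3,2,\dots,2,1,1,1)$ with $n-4$ twos. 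This last observation already explains why \emph{every} tree with three leaves attains the same value of $h$.

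For the maximum I would show that the degree sequence of $S_n$ majorizes that of every tree. Sorting a tree's degrees as $d_1\ge\cdots\ge d_n$, the tail bound $\sum_{i>j} d_i \ge n-j$ (there are $n-j$ terms, each at least $1$) gives $\sum_{i\le j} d_i \le n+j-2$, which is exactly the $j$-th partial sum of the sequence of $S_n$. Hence $S_n$ majorizes every tree and, by Corollary~\ref{cor:majdeg_kar}, maximizes $h$ (strictly for every other tree, by the strictness clause of Theorem~\ref{thr:Kar} and the strict convexity of $f(x)=x\log x$).

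For the minimum and the second smallest value I would prove the reverse majorization: that $P_n$ is majorized by every tree, and that the three-leaf sequence is majorized by every non-path tree. The clean device is to split the sorted sequence at $t$, the number of vertices of degree at least $2$: the first $t$ entries are all $\ge 2$ while the last $n-t$ entries are all equal to $1$. For $j\le t$ this gives $\sum_{i\le j} d_i \ge 2j$, and for $j\ge t$ the bottom $n-j$ entries are leaves, so $\sum_{i\le j} d_i = 2n-2-(n-j)=n+j-2\ge 2j$ whenever $j\le n-2$; since a tree has at least two leaves ($t\le n-2$) these two ranges cover all $j$ and match the partial sums of $P_n$, so $P_n$ is majorized by every tree and minimizes $h$. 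For a non-path tree we instead have $d_1\ge 3$ and at least three leaves (hence $t\le n-3$), and the same two-range computation, now using $\sum_{i\le j} d_i \ge 3+2(j-1)=2j+1$ for $j\le t$, shows the three-leaf sequence is majorized by every non-path tree. Corollary~\ref{cor:majdeg_kar} then makes the three-leaf value the smallest among non-path trees; as this sequence strictly majorizes that of $P_n$ and $f$ is strictly convex, $h(P_n)$ is strictly smaller, so the three-leaf value is exactly the second smallest, attained precisely by the trees with three leaves.

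The only genuine obstacle is establishing the ``majorized by every tree'' direction cleanly; the leaf/internal split above is what makes the lower partial-sum inequalities transparent, and all the strictness assertions (that $P_n$ is the unique minimizer and that no other tree ties the three-leaf value) follow from the strict-convexity clause of Karamata's inequality applied to $f(x)=x\log x$.
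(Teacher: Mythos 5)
Your proposal is correct and follows essentially the same route as the paper: identify the degree sequences of $S_n$, $P_n$, and the three-leaf trees, establish the relevant majorizations, and apply Corollary~\ref{cor:majdeg_kar} (with the strictness clause of Karamata's inequality for uniqueness). The only difference is that you explicitly verify the partial-sum inequalities that the paper's proof asserts without detail.
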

\begin{proof}
    The degree sequence of a tree satisfies $\sum_{1 \le i \le n} d_i =2(n-1)$ by the hand shaking lemma.
    Since a tree has at least $2$ leaves and all degrees are at least $1$, the degree sequence majorizes $(2,2,\ldots,2,1,1)$ and is majorized by $(n-1,1,1,\ldots 1).$
    The path and the star are the only graphs with this degree sequence.
    Any tree that is not a path has at least $3$ leaves.
    If it has more than $3$ leaves, 
    its degree sequence majorizes $(3,2,2,2,\ldots,2,1,1,1)$.
    So the conclusion follows from Corollary~\ref{cor:majdeg_kar}.
\end{proof}
Note that this statement is also true for $h_g(G)= \sum_i g(d_i)$ for any convex function $g$, in particular this proves~\cite[Conj.~1]{E18}.

Finally, we state the following result.

\begin{lem}\label{lem:extr_is_threshold_graph}
Let $G$ be a graph maximizing $h(G)$ among all graphs of size $m$. Then $G$ is a threshold graph, i.e. the vertices of $G$ can be partitioned into a clique $K$ and a stable set $S$ and the neighborhoods of the vertices of $S$ are nested.
\end{lem}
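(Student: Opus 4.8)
The plan is to show that if $G$ maximizes $h(G)$ among graphs of size $m$, then $G$ must be a threshold graph by proving that any non-threshold structure can be modified to strictly increase $h(G)$, contradicting maximality. A threshold graph is characterized by the absence of a certain $4$-vertex induced configuration: it contains no induced $2K_2$, $C_4$, or $P_4$. The natural strategy is therefore to assume $G$ contains one of these forbidden induced subgraphs and perform a local edge-swap that preserves the number of edges $m$ while increasing $h(G)$, appealing to Corollary~\ref{cor:majdeg_kar}.

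The key tool will be the following observation about how $h$ responds to edge rearrangements that make the degree sequence more ``spread out.'' Since $f(x)=x\log(x)$ is strictly convex, moving edges so as to increase the degree of high-degree vertices while decreasing the degree of low-degree vertices produces a degree sequence that majorizes the original, and hence strictly increases $h$. Concretely, I would carry out the argument in the following steps. First I would recall the excluded-subgraph characterization of threshold graphs and reduce the claim to ruling out each of the three forbidden induced subgraphs on four vertices. Second, for each forbidden configuration, I would identify an edge-swap: for instance, given vertices $a,b,c,d$ inducing a subgraph that is not threshold, replace some pair of edges by another pair on the same four vertices so that the total number of edges is unchanged but the degrees of two of the vertices shift apart (one up, one down). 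Third, I would verify that the resulting degree sequence majorizes the old one — this is the point where one must be careful, since the swap changes only the degrees of the four involved vertices and one has to check that the new multiset of degrees dominates the old one in the majorization order. Finally, I would invoke Corollary~\ref{cor:majdeg_kar} to conclude that $h$ strictly increases, contradicting the assumed maximality of $G$, and thus $G$ must already be threshold.

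The main obstacle I expect is making the majorization comparison go through cleanly at the level of the whole degree sequence rather than just the four affected vertices. A local edge-swap that separates two of the four degrees does not automatically yield a majorizing sequence globally, because majorization is a condition on all partial sums of the sorted sequences, and the two modified vertices may sit anywhere in the sorted order relative to the untouched vertices. I would handle this by choosing the swap so that it strictly increases the degree of the already-larger vertex and strictly decreases that of the already-smaller one (a transfer in the direction opposite to a Robin Hood operation), which is exactly the elementary operation known to induce majorization; care is needed to ensure such a swap is always available and results in a valid simple graph of the same size. Once the swap is shown to produce a simple graph whose degree sequence majorizes that of $G$ and differs from it, the strict form of Karamata's inequality in Theorem~\ref{thr:Kar} delivers the strict increase in $h(G)$. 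Since this result is cited as prior work (Yan~\cite{yan2021topological}), I would also note that the statement could alternatively be invoked directly, but the self-contained swap argument above is the cleanest route.
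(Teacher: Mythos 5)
Your proposal is correct in outline but takes a genuinely different route from the paper. The paper does not re-prove the threshold property at all: it cites Yan's Theorem~4, which establishes the statement for \emph{connected} graphs of given order and size, and then adds one short bridging observation --- since $f(x)+f(y)<f(x+y)$, an optimal graph of size $m$ must be connected (two components can be merged at a vertex to strictly increase $h$), so it is also optimal among graphs of its own order and size, and Yan's result applies. Your approach instead re-derives the result from scratch via local edge swaps, which is essentially the standard ``shifting'' proof that any strictly convex degree-sum functional is maximized by a threshold graph. Your plan is sound, and you correctly identify the one point that needs care (a local swap must be a transfer from a lower-degree vertex to a higher-degree one in order to yield global majorization). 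Two remarks on execution: first, the forbidden-subgraph route ($2K_2$, $C_4$, $P_4$) forces a case analysis in which you must check, for each configuration, that a transfer in the right direction is actually available (e.g.\ in an induced $C_4$ no transfer exists between the two nonadjacent pairs, so you must use an adjacent pair); it is cleaner to use the equivalent characterization that $G$ is threshold iff no two vertices $u,v$ have incomparable neighborhoods, since then $x\in N(u)\setminus(N(v)\cup\{v\})$ immediately supplies the edge $vx\to ux$ (or its mirror) in whichever direction is needed. Second, your self-contained argument has the advantage of working directly for graphs of given size without fixing the order, so it dispenses with the paper's connectivity step; what the paper's route buys is brevity, at the cost of relying on an external reference whose hypotheses (connected, given order) do not quite match the lemma as stated.
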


\begin{proof}
This was proved in \cite[Theorem~4]{yan2021topological} for connected graphs with given order and size.
Since $f(x)+f(y)<f(x+y)$, it is easy to see that the graph $G$ maximizing $h(G)$ among all graphs of size $m$, is connected.
As such it maximizes $h(G)$ among all graphs with size $m$ and order $n(G)$ as well, so $G$ is indeed a threshold graph.

\end{proof}

\section{Minimum entropy graphs given size}\label{sec:main}

In this section, we characterize the extremal graphs $G$ maximizing $h(G)$ given their size $m$ and clique number $k$. 
The extremal graphs are related to the colex graph, which as a corollary will be the graph minimizing the entropy among all graphs with size $m.$
\begin{defi}
Let $k$ and $m$ be integers with $m=\binom{k-1}{2}+a(k-1)+b$ for some integers $a,b$ satisfying $a\ge 0$ and $0 \le b \le k-2$.
The graph $\C(m,k)$ is formed by a clique $K_{k-1}$ whose vertices are all connected to a stable set of size $a$ and with $b$ vertices of a clique connected to one additional vertex. 
\end{defi}

An example of $\C(m,k)$ graph is presented in Figure~\ref{fig:adaptedcolexgraph}.

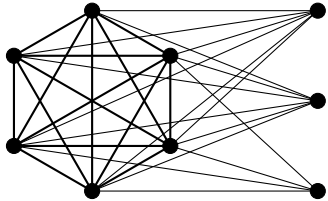
\begin{figure}[ht]
\centering
\begin{tikzpicture}{
\foreach \x in {0,60,...,300}{\draw[fill] (\x+30:1.2) circle (0.1);}
\foreach \x in {0,60,...,300}{
\draw(\x-30:1.2) -- (3,1.2);}
\foreach \x in {0,60,...,300}{
\draw(\x-30:1.2) -- (3,0);}
\foreach \x in {0,-60,-120,60}{
\draw(\x-30:1.2) -- (3,-1.2);}
\foreach \x in {0,120,240}{\draw[thick] (\x+30:1.2) -- (\x+210:1.2);}
\foreach \x in {0,60,...,300}{\draw[thick] (\x+30:1.2) -- (\x+90:1.2);}
\foreach \x in {0,60,...,300}{\draw[thick] (\x+30:1.2) -- (\x+150:1.2);}
\draw[fill] (0:3) circle (0.1);
\draw[fill] (3,1.2) circle (0.1);
\draw[fill] (3,-1.2) circle (0.1);}
\end{tikzpicture}
\caption{The graph $\C(31,7)$} \label{fig:adaptedcolexgraph}
\end{figure}

\begin{remark}\label{remark:colex}
When $m<\binom{k+1}{2}$, the graph $\C(m,k)$ is just the colex graph $\C(m)$.
\end{remark}

For instance, the graph $\C(31,8)$ is just the colex graph $\C(31)$ in Figure \ref{fig:colexgraph}.

The following lemma will be handy in both the theorem and its corollary. The proof of this lemma is quite computational and thus postponed to Section~\ref{sec:proof_comp_lemma}.
\begin{lem}\label{lem:largeclique}
    Let $k\ge 3$. For every $m\ge \binom{k}{2},$ we have $h(\C(m,k)) > h(\C(m,k-1)).$
\end{lem}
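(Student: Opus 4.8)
The plan is to prove the inequality by writing down the two degree sequences explicitly and estimating the difference $h(\C(m,k)) - h(\C(m,k-1))$ directly. Since both graphs are of the form $\C(m,K)$, I first record that for $m = \binom{K-1}{2} + a(K-1) + b$ with $0 \le b \le K-2$ the degree sequence of $\C(m,K)$ consists of $b$ vertices of degree $a + K - 1$, then $K-1-b$ vertices of degree $a + K - 2$, then $a$ vertices of degree $K-1$, and one additional vertex of degree $b$ (contributing $f(b)$, with $f(0)=0$). Hence
\[
h(\C(m,K)) = b\,f(a+K-1) + (K-1-b)\,f(a+K-2) + a\,f(K-1) + f(b).
\]
Applying this once with $K=k$ (parameters $a,b$) and once with $K=k-1$ (parameters $a',b'$) turns the claim into a single, fully explicit inequality between two weighted sums of values of $f$.

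The second step is to pin down how $(a',b')$ relates to $(a,b)$. Both decompositions describe the same $m$, so
\[
a'(k-2) + b' = a(k-1) + b + (k-2) = (a+1)(k-2) + (a+b),
\]
which, after reducing $a+b$ modulo $k-2$, yields $b' = (a+b) \bmod (k-2)$ and $a' = a + 1 + \lfloor (a+b)/(k-2)\rfloor$. I also record that the hypothesis $m \ge \binom{k}{2}$ forces $a \ge 1$, so all the relevant degrees are genuinely positive. The presence of the floor here is the source of the difficulty: passing from clique size $k-1$ to $k-2$ is a base change, and the resulting carry means there is no clean closed form for $(a',b')$, so the difference $h(\C(m,k)) - h(\C(m,k-1))$ is \emph{not} monotone in $m$ and must be controlled uniformly.

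I expect the main obstacle to be that the two degree sequences are majorization-incomparable, so Corollary~\ref{cor:majdeg_kar} (Karamata) cannot be invoked directly: the largest degree of $\C(m,k-1)$ strictly exceeds that of $\C(m,k)$, while $\C(m,k)$ compensates by carrying more vertices of medium-high degree coming from the larger clique $K_{k-1}$. Quantitatively showing that this surplus of balanced high degrees outweighs the single very large degree is exactly the computational heart of the lemma. To carry this out I would rewrite the difference using the identity $f(x+1)-f(x) = \int_x^{x+1}(\log t + 1)\,dt$ (the discrete derivative of $f$), so that the weighted sums of values of $f$ become sums of logarithmic increments that can be compared term by term or bounded by integrals; strict convexity of $f$ then supplies strictness in the final inequality.

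Finally, I would dispose of the boundary situations that the generic estimate does not cover: the base case $m = \binom{k}{2}$, where $\C(m,k) = K_k$ and the inequality reduces to a concrete statement in $k$; the small-$b$ (and $b'=0$) cases, where the low-degree vertex $w$ is absent or negligible; and small values of $k$ (recall the hypothesis $k \ge 3$, so that $k-2 \ge 1$ and the modular reduction makes sense). Each of these is a finite, explicit check. Assembling the generic estimate with these boundary cases yields $h(\C(m,k)) > h(\C(m,k-1))$ for all $m \ge \binom{k}{2}$.
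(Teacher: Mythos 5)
Your setup is sound and matches the paper's: the degree sequences of $\C(m,k)$ and $\C(m,k-1)$, the relation $a'=a+1+\lfloor (a+b)/(k-2)\rfloor$, $b'\equiv a+b \pmod{k-2}$, the observation that the two sequences are majorization-incomparable so Karamata cannot be applied directly, and the use of $f(x+1)-f(x)=\int_0^1(\log(x+t)+1)\,dt$ as the basic tool are all exactly the ingredients the paper uses. But at the point where the actual work begins, the proposal stops. You say the weighted sums "can be compared term by term or bounded by integrals," yet the two sequences have different lengths and different multiplicities of each degree value, so no term-by-term pairing is available; deciding \emph{which} logarithmic increments to pair against which, and proving the resulting scalar inequalities, is the entire content of the lemma, and none of it is carried out. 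The paper needs a genuine three-step reduction here: (i) monotonicity of the difference $h(\C(m,k))-h(\C(m,k-1))$ as $m$ moves toward the nearest value with $b'=0$ (which requires the estimates $\frac{b}{b'}\ge\frac{k-2}{k-3}$ and $\frac{k-3+a'}{k-2+a}\le\frac{k-1}{k-2}<\frac{k-2}{k-3}$); (ii) a periodicity argument showing the difference does not decrease under $m\mapsto m+(k-1)(k-2)$, reducing to $a'\le k$; and (iii) a telescoping-sum argument for the base case. Your proposal contains no analogue of any of these steps.

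A second concrete error is your description of the remaining work as "boundary situations" that are "a finite, explicit check." The case $b'=0$ is not a degenerate corner: it is an infinite family parametrized by $a'$, and in the paper it is the \emph{main} case to which everything else is reduced (and even after reducing to $b'=0$ one still needs the periodicity step to make it finite in $a'$). Treating it as a finite verification hides the core of the proof. Likewise "small-$b$" is an infinite family. So while your diagnosis of the difficulty (the carry in the base change, the failure of monotonicity in $m$, the incomparability of the sequences) is accurate and your normalizations are correct, the proposal as written has a genuine gap: the quantitative comparison that the lemma asserts is never established.
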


\begin{thr}\label{thr:extremalthresholdgraph_givencliquenr}
    Let $k$ and $m$ be integers with $m=\binom{k-1}{2}+a(k-1)+b$ for some integers $a,b$ satisfying $a>0$ and $0 \le b \le k-2$.
    Among all threshold graphs $G$ with clique number $k$ and size $m$, $h(G)$ is maximized by the graph $\C(m,k)$.  
\end{thr}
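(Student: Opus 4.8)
The plan is to recast the problem as an extremal question for integer partitions and then to prove that the balanced partition wins. By Lemma~\ref{lem:extr_is_threshold_graph} (and since a maximiser is connected) I may take $G$ to be a connected threshold graph and write its decomposition with clique $K=K_{k-1}$; every stable vertex is then joined to an initial segment of $K$, so the nested neighbourhoods are encoded by a partition $r_1\ge\cdots\ge r_q$ with each part at most $k-1$, with $\sum_j r_j=N:=m-\binom{k-1}{2}=a(k-1)+b$, and with largest part $r_1=k-1$ (this last condition is exactly what pins the clique number to $k$). Writing $c_i=|\{j:r_j\ge i\}|$ for the conjugate partition, the $i$-th clique vertex has degree $k-2+c_i$ while the stable vertices have degrees $r_j$, so
\[
h(G)=\sum_{i=1}^{k-1}f(k-2+c_i)+\sum_{j=1}^{q}f(r_j),\qquad f(x)=x\log x .
\]
A direct check shows that $\C(m,k)$ is the partition $r=\big((k-1)^{a},b\big)$, equivalently the balanced column vector $c=\big((a+1)^{b},a^{\,k-1-b}\big)$; so the theorem is equivalent to saying that, over all admissible partitions, $h$ is maximised by the one whose columns are balanced.

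Before optimising I would record the two opposing effects that make this delicate. Applying Karamata's inequality (Theorem~\ref{thr:Kar}) to the convex function $f$, the stable contribution $\sum_j f(r_j)$ is maximised precisely when the rows are as long as possible, namely at $r=\big((k-1)^a,b\big)$; but by the very same convexity the clique contribution $\sum_i f(k-2+c_i)$ is \emph{minimised} at balanced columns. Thus Corollary~\ref{cor:majdeg_kar}, or any single majorisation, cannot decide the theorem, and a one-box rebalancing of the underlying Young diagram need not move $h$ monotonically. The entire content of the statement is that the gain on the stable side always dominates the loss on the clique side.

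Lemma~\ref{lem:max_with_balanced} is the natural quantitative tool for the clique side: with $n=k-1$, $t=N$ and $\ell=k-2$ it states exactly that $\sum_i\big(f(k-2+c_i)-f(c_i)\big)$, the entropy that the clique base $k-2$ extracts from the columns, is maximised at the balanced vector. I would warn, however, that one cannot simply peel off this base: bounding the clique loss this way leaves the symmetric residual $\sum_i f(c_i)+\sum_j f(r_j)$, and this residual is \emph{not} maximised by $\C(m,k)$ (already for $k=5$, $N=10$ the admissible partition $(4,3,3)$ beats $(4,4,2)$). Consequently the base $k-2$ must stay coupled to the rows throughout. My proposed route is therefore to compare an arbitrary admissible $\lambda$ with $\C(m,k)$ through one global transport of boxes of the conjugate diagram from the tall columns to the short ones, keeping each weight $f(k-2+c_i)$ intact, and to show that the telescoped change in $h$, namely
\[
\sum_{j}\big(f(r_j^{\C})-f(r_j)\big)-\sum_{i}\big(f(k-2+c_i)-f(k-2+c_i^{\C})\big),
\]
is nonnegative, with Lemma~\ref{lem:max_with_balanced} and Karamata controlling the two sums.

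The main obstacle is precisely that this balance is tight: for $k=5$, $m=16$ the optimum $\C(16,5)$ (degree sequence $6,6,5,5,4,4,2$) beats the near-balanced competitor $6,6,6,4,4,3,3$ only by a small margin, so individual column moves can decrease $h$ and the comparison must be carried out globally rather than step by step. I expect the worst case to be the partitions with many low-degree stable vertices, whose columns are very tall and unbalanced; these I would first remove from consideration by a cruder bound (or by repeatedly coalescing two stable vertices, using $f(x)+f(y)<f(x+y)$, to cut down their number), reducing to partitions with few stable vertices. Establishing the displayed inequality for the remaining partitions — keeping the clique base coupled to the rows — is the crux and the step I expect to demand the most care.
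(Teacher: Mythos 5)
Your setup is sound and you have correctly identified the real difficulty: the stable-side sum $\sum_j f(r_j)$ and the clique-side sum $\sum_i f(k-2+c_i)$ are pulled in opposite directions by convexity, so no single majorisation settles the question. But the proposal stops exactly there. The displayed inequality that you call ``the crux'' --- the global comparison of an arbitrary admissible partition with the balanced one, with the base $k-2$ kept coupled to the rows --- is never established, and you say yourself that you expect it to ``demand the most care.'' The auxiliary reduction you sketch is also not secure: coalescing two stable vertices via $f(x)+f(y)<f(x+y)$ does increase $h$, but the merged vertex may need more than $k-1$ clique neighbours, so the move can leave the class of threshold graphs with clique number $k$ (or not be realisable at all), and no ``cruder bound'' for the tall-column partitions is actually supplied. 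As it stands this is a correct problem analysis plus a plan, not a proof.

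The paper avoids the global transport entirely by a double induction on $k$ and $m$. In a threshold graph $G$ of clique number $k$ there is a clique vertex $v$ of degree exactly $k-1$ (some clique vertex is missed by every stable vertex, since nested neighbourhoods of stable vertices all avoid at least one clique vertex); removing it gives
\[
h(G)= h(G \setminus v)+f(k-1)+\sum_{j=1}^{k-1} \bigl( f(d_j)-f(d_j-1) \bigr),
\]
where $d_1,\dots,d_{k-1}$ are the degrees of the neighbours of $v$. The second summand is exactly the situation of Lemma~\ref{lem:max_with_balanced} with $\ell=1$, so it is maximised when the $d_j$ are balanced, and the first summand $h(G\setminus v)$ is controlled by the induction hypothesis on $m$ (together with Lemma~\ref{lem:largeclique} to compare clique numbers $k$ and $k-1$ when $\omega(G\setminus v)$ could be either). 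This decoupling is precisely what dissolves the tension you describe: instead of one global rebalancing you peel off one clique vertex at a time, and the ``loss on the clique side'' is absorbed into the inductively known quantity $h(\C(m-(k-1),\cdot))$. If you want to complete your route, you would either need to prove your global inequality directly (which the tight examples you give suggest is genuinely delicate) or reorganise it into such an inductive peeling.
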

\begin{proof}
    We will prove this by a double induction on $k$ and $m$.
    When $k=2$, the graph is a tree and the unique extremal graph is a star, as mentioned in Proposition~\ref{prop:basiccases_trees}. So the base case with $k=2$ is true for every $m \ge 1.$
    Now we assume that the statement of the theorem has been proven for all values from $2$ to $k-1$ for every choice of $m$ and prove by induction that this hypothesis is also true for clique number $k$.
    When $m=\binom{k}{2},$ the graph $\C(m,k)=K_k$ is the unique one and thus extremal.
    So now assume it is proven when the size is between $\binom{k}{2}$ and $m-1$ and let $G$ be a threshold graph with clique number $k$ and size $m$.
    Let $v$ be a vertex in the clique whose degree is precisely $k-1.$
    Let $d_1, d_2, \ldots,d_{k-1}$ be the degrees of the $k-1$ neighbours of $v$.
    In that case, we have
    \begin{equation}\label{eq:hG_inductive-gen}
        h(G)= h(G \backslash v)  +f(k-1)+\sum_{j=1}^{k-1} f(d_j) -\sum_{j=1}^{k-1} f(d_j-1)
    \end{equation}
    When $m< \binom{k}{2}+k-1$, $\omega(G \backslash v)=k-1$ and since the result is known for clique number $k-1$, we have $h(G \backslash v) \le h(\C(m-(k-1),k-1)).$
    When $m\ge \binom{k}{2}+k-1$, by the induction hypothesis and lemma~\ref{lem:largeclique} we know that $h(G \backslash v) \le h(\C(m-(k-1),k))$.
    By Lemma~\ref{lem:max_with_balanced} the other part of the upper bound in~\eqref{eq:hG_inductive-gen} is maximized if and only if all $d_i$ differ at most $1$.
    Thus, we conclude that the maximum is precisely $h(\C(m,k))$, and that $\C(m,k)$ is the unique extremal graph.
    So by induction, the statement is true for every $m.$ Finally, by complete induction we conclude that the theorem is true for every value of $k.$
\end{proof}

As a corollary, we now derive the proof of our main result.
\begin{proof}[Proof of Theorem~\ref{thr:main}]
    We know that the extremal graph is a threshold graph by Lemma~\ref{lem:extr_is_threshold_graph} and therefore the result follows from Theorem~\ref{thr:extremalthresholdgraph_givencliquenr}, Lemma~\ref{lem:largeclique} and Remark~\ref{remark:colex}.
\end{proof}

\section{Proof of Lemma~\ref{lem:largeclique}}\label{sec:proof_comp_lemma}

\subsection{Notation and remarks}

Write 
        \begin{equation*}
            m= \binom{k-1}{2} + a(k-1) +b,
        \end{equation*}
        with $a\ge 0$ and $0 \leq b \leq k-2$, and
        \begin{equation}\label{eq:expression_m}
            m= \binom{k-2}{2} + a'(k-2) +b',
        \end{equation}
        with $a'\ge 2$ and $0 \leq b' \leq k-3$.
        Then the degree sequence of $C(m,k)$ is \begin{equation*}
        ((k-1+a)^b,(k-2+a)^{k-1-b},(k-1)^a,b),
        \end{equation*}
        and the degree sequence of $C(m,k-1)$ is 
        \begin{equation*}
        ((k-2+a')^{b'},(k-3+a')^{k-2-b'},(k-2)^{a'},b').
        \end{equation*}
       Remark that since 
       $m= \binom{k-1}{2} + a(k-1) +b = \binom{k-2}{2} +(k-2)+a(k-2)+a+b$, we have
        
        \begin{align*}
            a'&=a+1+\left\lfloor \frac{a+b}{k-2} \right\rfloor \mbox{ and}\\
            b'&\equiv a+b \pmod{k-2}.
        \end{align*}
 
Also note that \begin{equation*}
    f(x) = x \log(x) = \int_1^x \left( \log(x)+1\right) dx,
\end{equation*} a fact that we use frequently in the proof.

\subsection{Summary of the proof}

Our proof of Lemma~\ref{lem:largeclique} involves three steps aimed at reducing the general case to a more manageable one:
\begin{enumerate}
    \item We begin by showing that if Lemma~\ref{lem:largeclique} is true for all $m$ for which $b'=0$ in equation~\eqref{eq:expression_m}, then it holds for all $m$, i.e. for every $0\le b'\leq k-3$ as well. We do so by showing that if a counterexample does exist for some $m$ with $1\le b'\leq k-3$, then there is a counterexample with size $m'$ which can be represented in such a way that the corresponding $b'$ (in~\eqref{eq:expression_m}) is $0$. 
    \item We then show that if Lemma~\ref{lem:largeclique} holds in the case $b'=0$ and $2\le a'\leq k$, then it holds for all values of $a'$. We do so by proving that if our claim is true for a certain size $m$, then so it is for $m'=m+(k-1)(k-2).$
    \item We conclude by proving that the claim holds for values $m$ that be written as in~\eqref{eq:expression_m} with $2 \le a' \leq k$ and $b'=0.$
\end{enumerate}

In the last two steps, we are careful with a reduction to $m=\binom k2 -1,$ since then $\C(m,k)=\C(m,k-1)$.

In the following, we state these steps in the form of claims, which are proven separately. Note that one can check the three claims in reverse order as well. In that case, one is increasing the range of values of $m$ for which it is known that Lemma~\ref{lem:largeclique} is true. 

\subsection{Detailed proof}

    \begin{claim}
        It is sufficient to prove Lemma~\ref{lem:largeclique} for the case $b'=0$ (and every $a' \ge 3$).
    \end{claim}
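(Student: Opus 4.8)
The plan is to argue by contraposition: assuming the inequality in Lemma~\ref{lem:largeclique} fails for some size $m$ with $1\le b'\le k-3$, I will exhibit a size $m'$ whose parameter $b'$ in~\eqref{eq:expression_m} is $0$ at which it also fails. Writing $\Delta(s)=h(\C(s,k))-h(\C(s,k-1))$, a failure means $\Delta(s)\le 0$. The candidate is $m'=m-b'$, the start of the same $\C(\cdot,k-1)$-block, i.e.\ the family of sizes sharing the fixed value $a'$ with $b'$ running through $0,1,\dots,k-3$. I aim to show that within each such block $\Delta$ attains its minimum at the block start, so that $\Delta(m-b')\le\Delta(m)\le 0$; and $m-b'=\binom{k-2}{2}+a'(k-2)$ indeed has parameter $0$.

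To work inside a block I use the increment $\phi(x):=f(x+1)-f(x)=\int_x^{x+1}(\log t+1)\,dt$, which is positive, increasing, and concave. Tracking the addition of a single edge, the first difference of $\Delta$ splits into a contribution from each graph,
\[
\Delta(s+1)-\Delta(s)=\bigl[\phi\bigl((k-2)+\alpha\bigr)-\phi\bigl((k-3)+a'\bigr)\bigr]+\bigl[\phi(\beta)-\phi(B)\bigr],
\]
where $(\alpha,\beta)$ and $(a',B)$ are the parameters of $s$ in the $\C(\cdot,k)$- and $\C(\cdot,k-1)$-representations; within a fixed block $B$ runs through $0,1,\dots,k-3$ while $a'$ is constant. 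Summing from the block start $m_0=m-b'$ up to $m_0+j$ (so $j=b'$) and using $\sum_{i=0}^{j-1}\phi(i)=f(j)$, the desired bound $\Delta(m_0+j)\ge\Delta(m_0)$ reduces to
\[
\sum_{i=0}^{j-1}\phi(\beta_i)-f(j)\ \ge\ \sum_{i=0}^{j-1}\Bigl[\phi\bigl((k-3)+a'\bigr)-\phi\bigl((k-2)+\alpha_i\bigr)\Bigr],
\]
where $\alpha_i,\beta_i$ are the $\C(\cdot,k)$-parameters of $m_0+i$. The left side is automatically nonnegative: as $i$ ranges over the block the $\beta_i$ are $j$ distinct values in $\{0,\dots,k-2\}$, so since $\phi$ is increasing $\sum\phi(\beta_i)$ is at least the sum of the $j$ smallest values $\phi(0),\dots,\phi(j-1)=f(j)$. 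Each summand on the right is also nonnegative, because $a'\ge\alpha_i+1$ forces $(k-3)+a'\ge(k-2)+\alpha_i$.

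The main obstacle is exactly this last inequality, in which a nonnegative ``$\beta$-surplus'' on the left must dominate a nonnegative ``clique-deficit'' on the right. The difficulty is structural: the two graphs evolve with mismatched periods $k-1$ and $k-2$, so as $s$ runs over a single $\C(\cdot,k-1)$-block the parameter $\beta$ of $\C(\cdot,k)$ wraps around once; after the wrap one has $\beta_i<B$, and a naive term-by-term (second-difference) comparison fails there. I therefore expect to avoid term-by-term concavity and instead compare the two accumulated sums directly, using the quantitative bound $\phi(x+1)-\phi(x)=\log\bigl(1+\tfrac1x\bigr)$ to control the clique-deficit against the surplus generated by the large jump $\phi(\beta_0)$ at the block start. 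Finally, the block $a'=2$ needs separate care: there the block start is $m=\binom{k}{2}-1$, where $\C(m,k)=\C(m,k-1)$ and $\Delta=0$, which is why the reduced case in the claim is stated only for $a'\ge 3$; the remaining strictness inside the $a'=2$ block is recovered from the strict version of the same telescoping estimate.
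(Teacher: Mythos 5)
There are two problems here, and the first is fatal to the reduction as you have set it up. You always walk \emph{down} to the start of the current block, $m'=m-b'$, on the strength of the claim that $\Delta$ is minimized over each block at its start. That claim is false for the blocks whose start $m_0=\binom{k-2}{2}+a'(k-2)$ has $\C(\cdot,k)$-parameter $b_0=0$, which happens exactly when $a'\equiv 1\pmod{k-1}$ (e.g.\ $a'=k$): there $\beta_i=B_i=i$ for all $i$, so your own first-difference formula gives
\[
\Delta(s+1)-\Delta(s)=\phi\bigl((k-2)+\alpha_i\bigr)-\phi\bigl((k-3)+a'\bigr)<0,
\]
since $a'\ge\alpha_i+1$ and $\phi$ is increasing; hence $\Delta$ is strictly decreasing across that entire block and its minimum sits at the block \emph{end}. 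Concretely, for $k=4$ the size $m=10$ has $(a',b')=(4,1)$ and $(a,b)=(2,1)$, and $\Delta(10)-\Delta(9)=\phi(4)-\phi(5)=2f(5)-f(4)-f(6)<0$ by strict convexity of $f$, so the inequality $\Delta(m-b')\le\Delta(m)$ that you need is false. Equivalently, in such a block the left-hand side of your key display equals $\sum_i\phi(\beta_i)-f(j)=0$ while the right-hand side is strictly positive. The paper avoids this by letting the sign of $b-b'$ at $m$ dictate the direction of the walk: when $b'\ge b$ the increment $\phi(k-3+a')+\phi(b')$ of $h(\C(\cdot,k-1))$ termwise dominates the increment $\phi(k-2+a)+\phi(b)$ of $h(\C(\cdot,k))$, so $\Delta$ is decreasing and one walks \emph{up} until $b'$ wraps to $0$ at the start of the \emph{next} block; only when $0<b'<b$ does one walk down to the current block's start.

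Second, even in the regime where descending is the right move, you have not actually proved anything: you explicitly flag the domination of the ``clique-deficit'' by the ``$\beta$-surplus'' as the main obstacle and offer only a plan involving $\phi(x+1)-\phi(x)=\log(1+\tfrac1x)$. The paper's choice of direction makes this step elementary, because under the hypothesis $0<b'<b$ the single-step inequality $\phi(b-1)-\phi(b'-1)\ge \phi(k-3+a')-\phi(k-2+a)$ follows from the two integral bounds $\phi(b-1)-\phi(b'-1)\ge\log(b/b')\ge\log\frac{k-2}{k-3}$ and $\phi(k-3+a')-\phi(k-2+a)\le\log\frac{k-3+a'}{k-2+a}\le\log\frac{k-1}{k-2}$, with no accumulation over the block needed. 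As written, both the choice of target (between $m-b'$ and $m+(k-2-b')$) and the central estimate remain to be repaired, so the claim is not established.
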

    \begin{claimproof}
        If $h(\C(m,k)) \le h(\C(m,k-1))$ and $k-2>b'\ge b$, then $h(\C(m+1,k)) \le h(\C(m+1,k-1))$ as well, since 
        \begin{align*}
        h(\C(m+1,k-1))-h(\C(m,k-1))&=f(k-2+a')-f(k-3+a')+f(b'+1)-f(b')\\
        &\ge f(k-1+a)-f(k-2+a)+f(b+1)-f(b)\\
        &= h(\C(m+1,k))-h(\C(m,k)).
        \end{align*}
        So we can repeat this until $b'+1=k-2$.
        But then $m+1=\binom{k-2}{2}+(a'+1)(k+2),$ so it is sufficient to verify a case with $b'=0.$

        If $h(\C(m,k)) \le h(\C(m,k-1))$ and $0<b'< b\le k-2$ (note that this implies that $a'\ge 3$), then $h(\C(m-1,k)) \le h(\C(m-1,k-1))$ as well.
        For this note that $\frac{b}{b'} \ge \frac{k-2}{k-3}$ and $\frac{k-3+a'}{k-2+a} \le \frac{k-1}{k-2}< \frac{k-2}{k-3},$
        the latter being true since $a'-1=a+\left\lfloor \frac{a+b}{k-2} \right\rfloor \le a+\frac{a+k-2}{k-2}$. This implies that 
        \begin{align}
        h(\C(m,k))-h(\C(m-1,k))&= f(k-1+a)-f(k-2+a)+f(b)-f(b-1)\notag\\
        &\ge f(k-2+a')-f(k-3+a')+f(b')-f(b'-1)\label{ineq:caseb'}\\
        &= h(\C(m,k-1))-h(\C(m-1,k-1)).\notag
        \end{align}
        Inequality~\eqref{ineq:caseb'} is true by the following two inequalities 
        \begin{align*}
            f(b)-f(b-1)-\left( f(b')-f(b'-1) \right) &= \int_0^1 \log\left( \frac{b-1+t}{b'-1+t} \right) dt\\
            &\ge \int_0^1 \log \left(\frac{b}{b'}\right) dt\\
            &= \log\left( \frac{b}{b'} \right)\\
            &\ge \log\left( \frac{k-2}{k-3} \right)
        \end{align*}
        and 
        \begin{align*}
            f(k-2+a')-f(k-3+a')-\left( f(k-1+a)-f(k-2+a) \right) &= \int_0^1 \log\left( \frac{k-3+a'+t}{k-2+a+t} \right) dt\\
            &\le \int_0^1 \log\left( \frac{k-3+a'}{k-2+a} \right) dt\\
            &\le \log\left( \frac{k-2}{k-3} \right).
        \end{align*}
    \end{claimproof}
    
    \begin{claim}
        It is sufficient to prove Lemma~\ref{lem:largeclique} for the case $2\le a'\le k-1$ and $b'=0$.
    \end{claim}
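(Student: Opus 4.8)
The plan is to turn the claim into an induction step on $a'$ with step size $k-1$, realized by adding a fixed block of $(k-1)(k-2)$ edges to the size. Since $m = \binom{k-2}{2} + a'(k-2) + b'$, increasing $m$ by $(k-1)(k-2)$ leaves $b'$ unchanged and replaces $a'$ by $a'+(k-1)$ (equivalently, it replaces $a$ by $a+(k-2)$ and leaves $b$ unchanged). Writing $m' = m + (k-1)(k-2)$, I would first record the combinatorial description of this operation: $\C(m',k)$ is obtained from $\C(m,k)$ by appending $k-2$ new stable vertices, each joined to all of $K_{k-1}$, while $\C(m',k-1)$ is obtained from $\C(m,k-1)$ by appending $k-1$ new stable vertices, each joined to all of $K_{k-2}$. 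Both operations add exactly $(k-1)(k-2)$ edges, so they are size-compatible.

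The heart of the argument is then to compare the two increments. Setting $\Delta_k = h(\C(m',k)) - h(\C(m,k))$ and $\Delta_{k-1} = h(\C(m',k-1)) - h(\C(m,k-1))$, the operation only changes the degrees of the clique vertices (each by the corresponding additive constant) and creates the new degree-$(k-1)$ (resp. degree-$(k-2)$) vertices, so using the degree sequences already recorded I can write $\Delta_k$ and $\Delta_{k-1}$ explicitly as sums of terms $f(d+c)-f(d)$ together with the contributions $(k-2)f(k-1)$ and $(k-1)f(k-2)$. Because $h(\C(m',k)) - h(\C(m',k-1)) = \big(h(\C(m,k)) - h(\C(m,k-1))\big) + (\Delta_k - \Delta_{k-1})$, the claim reduces to showing $\Delta_k \ge \Delta_{k-1}$, with strictness needed only to upgrade equality to strict inequality at the first step. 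Thus, if the lemma is known on any $k-1$ consecutive values of $a'$ (a complete residue system modulo $k-1$; the natural choice $2\le a'\le k$ contains the boundary value $a'=2$, i.e.\ $m=\binom{k}{2}-1$, where $\C(m,k)=\C(m,k-1)$), iterating $m \mapsto m+(k-1)(k-2)$ reaches every admissible $a'$, and $\Delta_k \ge \Delta_{k-1}$ propagates the inequality upward.

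To prove $\Delta_k \ge \Delta_{k-1}$ I would use the integral form $f(d+c)-f(d) = \int_0^c(\log(d+s)+1)\,ds$. A first clean simplification is that the total ``$+1$''-mass equals $(k-1)(k-2)$ on both sides (on one side $k-1$ vertices each gain $k-2$, on the other $k-2$ vertices each gain $k-1$), so these linear terms cancel and only the logarithmic integrals plus the positive slack $(k-1)(k-2)\log\frac{k-1}{k-2}$ survive. After substituting $b = q(k-2)-a$ and $a' = a+1+q$, where $q=(a+b)/(k-2)$ is an integer because $b'=0$, the desired inequality becomes a purely analytic statement comparing $\int_0^{k-2}\log(\cdot)\,ds$ integrals centred near $a+k-2$ against a single $\int_0^{k-1}\log(\cdot)\,ds$ centred near $a'+k-3$.

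The main obstacle is precisely this analytic inequality. The difficulty is that the two sides are not termwise comparable: the clique on the $k-1$ side has one more vertex but each receives a smaller shift, while $a'$ is genuinely larger than $a$ (roughly $a\frac{k-1}{k-2}$), so the logarithmic arguments are close but offset. I expect to control this by bounding each $\log$-integral by its value at a suitable interior point (using concavity of $\log$, in the spirit of the estimates already used in the previous claim), reducing everything to an elementary inequality in $a$, $k$, $q$, and then verifying that the positive term $(k-1)(k-2)\log\frac{k-1}{k-2}$ dominates the residual error. Extra care is needed at the boundary $a'=2$ to keep the inequality strict for all subsequent steps.
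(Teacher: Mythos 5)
Your proposal follows essentially the same route as the paper: the same step $m\mapsto m+(k-1)(k-2)$ (fixing $b'$ and shifting $a'$ by $k-1$, so that $2\le a'\le k$ gives a complete residue system), the same reduction to $\Delta_k\ge\Delta_{k-1}$, the same integral representation with the linear $(k-1)(k-2)$ mass cancelling to leave the slack $(k-1)(k-2)\log\frac{k-1}{k-2}$, and the same care about strictness at $a'=2$ where $\C(m,k)=\C(m,k-1)$. The only difference is that the paper closes the final analytic comparison by the explicit bound $k-3+a'\le\frac{k-1}{k-2}(k-2+a)$ together with a linear rescaling $t\mapsto\frac{k-2}{k-1}t$ of the $\int_0^{k-2}\log(\cdot)$ integral, rather than the interior-point/concavity estimate you sketch, but this is a matter of execution, not of approach.
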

    \begin{claimproof}
    
    It is sufficient to prove that for every $m \ge \binom{k}{2}-1$ and $m'=m+(k-1)(k-2),$ we have
    $h(\C(m',k))-h(\C(m',k-1))\ge h(\C(m,k))-h(\C(m,k-1))$ and that this inequality is strict when $m=\binom{k}{2}-1.$
    
    Write $m=\binom{k-2}{2}+a'(k-2)=\binom{k-1}{2}+a(k-1)+b$ and let $m'=m+(k-1)(k-2)=\binom{k-2}{2}+(a'+k-1)(k-2)$.
        
    Then
    \begin{align*}
        h(\C(m',k))-h(\C(m,k))
        =&b \left( f(k-1+a+k-2)-f(k-1+a) \right) \\ &+ (k-1-b)\left( f(k-2+a+k-2)-f(k-2+a) \right) + (k-2)f(k-1)   \\
        \ge & 
        (k-1)\left( f(k-2+a+k-2)-f(k-2+a) \right) + (k-2)f(k-1)
    \end{align*}
    since $f(k-1+a+k-2)-f(k-1+a) > f(k-2+a+k-2)-f(k-2+a).$ Also,
    \begin{align*}
        h(\C(m',k-1))-h(\C(m,k-1))&=
        (k-2)\left( f(k-3+a'+k-1)-f(k-3+a') \right) + (k-1)f(k-2).
    \end{align*}
        
        Here $k-3+a'\le \frac{k-1}{k-2}(k-2+a).$
        Hence, we have 
        
        \begin{align*}
            f(k-2+a+k-2)-f(k-2+a)-(k-2)  &= \int_0^{k-2}  \log\left( k-2+a +t \right) dt\\
            &=\frac{k-2}{k-1} \int_0^{k-1}  \log\left( k-2+a +\frac{k-2}{k-1} t \right) dt
        \end{align*}
        and
        \begin{align*}
            f(k-3+a'+k-1)-f(k-3+a')-(k-1)  &= \int_0^{k-1}  \log\left( k-3+a' +t \right) dt\\
            &\le \int_0^{k-1}  \log\left( \frac{k-1}{k-2}(k-2+a) +t \right) dt\\
            &= \int_0^{k-1}  \log\left( k-2+a +\frac{k-2}{k-1} t \right) +\log\left( \frac{k-1}{k-2}\right) dt.
        \end{align*}
    So we conclude $h(\C(m',k))-h(\C(m,k)) \ge h(\C(m',k-1))-h(\C(m,k-1))$, which is equivalent to $h(\C(m',k))-h(\C(m',k-1)) \ge h(\C(m,k))-h(\C(m,k-1)).$
    Furthermore the difference was strict when $b>0$, which is the case when $m=\binom k2 -1.$
    \end{claimproof}

    \begin{claim}
         Lemma~\ref{lem:largeclique} is true when $2\le a' \le k$ and $b'=0$, i.e.\ when $m=\binom{k-2}{2}+(a+2)(k-2)=\binom{k-1}{2}+a(k-1)+(k-2-a)$ for some $0 \le a \le k-2.$
    \end{claim}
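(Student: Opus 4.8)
The plan is to make the two degree sequences explicit, form the difference $D := h(\C(m,k)) - h(\C(m,k-1))$, and show $D>0$. Substituting $b=k-2-a$ into the degree sequence of $\C(m,k)$, and $b'=0,\ a'=a+2$ into that of $\C(m,k-1)$ (discarding the isolated vertex of degree $b'=0$, which contributes $f(0)=0$), one obtains
\[
h(\C(m,k)) = (k-2-a)f(k-1+a) + (1+a)f(k-2+a) + a\,f(k-1) + f(k-2-a)
\]
and $h(\C(m,k-1)) = (k-2)f(k-1+a) + (a+2)f(k-2)$. The hypothesis $m\ge\binom{k}{2}$ of Lemma~\ref{lem:largeclique} translates into $a\ge 1$, since $m-\binom{k}{2}=a(k-2)-1$; the value $a=0$ corresponds to $m=\binom{k}{2}-1$, where $\C(m,k)=\C(m,k-1)$ and so $D=0$. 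Hence it suffices to prove $D>0$ for $1\le a\le k-2$.

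Next I would collect terms and, writing $c=k-2$, rearrange the difference as $D=T_1+T_2$ with
\[
T_1 = a\bigl[(f(c+1)-f(c)) - (f(c+1+a)-f(c+a))\bigr], \qquad T_2 = f(c+a)+f(c-a)-2f(c).
\]
Using $f(x)=\int_1^x(\log t+1)\,dt$, these collapse to the clean integral forms $T_1 = a\int_0^1 \log\frac{c+s}{c+a+s}\,ds$ and $T_2 = \int_0^a \log\frac{c+s}{c-s}\,ds$. Here $T_1\le 0$ (the integrand has $\frac{c+s}{c+a+s}<1$) while $T_2\ge 0$ is a second difference of the convex $f$; the whole content of the claim is that this gain beats the cost.

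The crux is the comparison of these opposite-sign quantities. Since the integrand of $T_1$ is decreasing in $s$, I would bound $T_1 \ge -a\log\frac{c+a}{c}$, which reduces everything to the single-variable inequality $G(a) := T_2 - a\log\frac{c+a}{c}\ge 0$. One computes $G(0)=0$, then $G'(a)=\log\frac{c}{c-a}-\frac{a}{c+a}$ so that $G'(0)=0$, and finally $G''(a)=\frac{1}{c-a}-\frac{c}{(c+a)^2}$, which is nonnegative precisely because $(c+a)^2-c(c-a)=3ac+a^2\ge 0$ on $0\le a\le c$. Thus $G$ is convex with a double zero at $a=0$, hence $G(a)>0$ for $a>0$, giving $D\ge G(a)>0$ for $1\le a\le k-2$.

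The main obstacle is exactly this final comparison: $T_1$ and $T_2$ are of the same order of magnitude, so any crude estimate fails to separate them, and one must extract the exact cancellation. Reducing to the convexity of the one-variable function $G$ (whose double zero sits at $a=0$, the value where the two graphs coincide) is what makes the bound tight enough. A secondary point of care is the endpoint $a=c=k-2$, where $c-a=0$: there one uses $f(0)=0$ and the integrability of the logarithmic singularity in $T_2$, and notes $G(c)=c\log 2>0$ directly.
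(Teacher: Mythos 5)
Your setup coincides with the paper's: the same two degree sequences, the same algebraic reduction (your $D=T_1+T_2$ with $c=k-2$ is exactly the paper's inequality~\eqref{ineq:telescoping} rearranged), and the same observation that $a=0$ is the degenerate case $m=\binom{k}{2}-1$ where the two graphs coincide. The final step, however, is genuinely different. The paper expands $T_2=f(c+a)+f(c-a)-2f(c)$ as a telescoping sum of $a$ terms, each of the form $\int_0^1\log\frac{x+t}{x-a+t}\,dt$ at $x=c,\dots,c+a-1$, and compares termwise with $-T_1=a\int_0^1\log\frac{x+t}{x-a+t}\,dt$ at $x=c+a$, using only that this quantity is decreasing in $x$; this is a purely discrete monotonicity argument with no further calculus. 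You instead replace $-T_1$ by the cruder single bound $a\log\frac{c+a}{c}$ and then prove the one-variable inequality $G(a)=T_2-a\log\frac{c+a}{c}\ge 0$ via $G(0)=G'(0)=0$ and $G''\ge 0$. Both work; the paper's route avoids having to check that a weakened bound on $T_1$ is still sharp enough (your "main obstacle"), while yours reduces everything to an explicit second-derivative computation and handles the endpoint $a=c$ cleanly via $G(c)=c\log 2$. One small correction: the integrand $\log\frac{c+s}{c+a+s}$ of $T_1$ is \emph{increasing} in $s$, not decreasing (indeed $\frac{c+s}{c+a+s}=1-\frac{a}{c+a+s}$); your bound $T_1\ge -a\log\frac{c+a}{c}$ is nevertheless valid precisely because the integrand is minimized at $s=0$, so the stated justification should be flipped but the estimate and everything downstream of it stand.
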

    \begin{claimproof}
        For this, we need to prove that 
        \begin{equation*}
            (k-2)f(k+a-1)+(a+2)f(k-2) \le (k-2-a)f(k+a-1)+(a+1)f(k-2+a)+a f(k-1) +f(k-2-a)
        \end{equation*}
        and that this is strict when $a>0$.
        The latter is equivalent to
        \begin{equation}
             a \left[ f(k+a-1)-f(k+a-2)-f(k-1)+f(k-2) \right]\le f(k-2+a)+f(k-2-a)-2f(k-2).\label{ineq:telescoping}
        \end{equation}
        We rewrite the right hand side of inequality~\eqref{ineq:telescoping} as a telescoping sum:
        \begin{align*}
           &f(k-2+a)+f(k-2-a)-2f(k-2) \\= &\sum_{i=1}^a 
            \left[ f(k-2+i)-f(k-3+i)-f(k-2-a+i)+f(k-3-a+i) \right].
        \end{align*}
        The inequality now follows from the fact that 
        \begin{align*}
            f(x+1)-f(x)-f(x+1-a)+f(x-a)
            =\int_{0}^1 \log \left( \frac{ x+t}{x-a+t} \right) dt
        \end{align*} is a function that is strictly decreasing in $x$ (where the domain is $x \ge a$).
    \end{claimproof}

\section{Concluding remarks}

In this paper, we proved that the graph minimizing the entropy among all graphs with size $m$ is the colex graph $\C(m).$
The colex graph is the graph that maximizes the number of triangles as well, so it is a structured graph for which one might expect that the entropy is small.
We remark that Gan-Loh-Sudakov (\cite{GLS}) type problems with a maximum degree $r$ condition for the entropy-question does not give any additional challenge, in contrast with the clique-version~\cite{CdK19,C19,CC20}.
When $m \le \binom{r+1}{2}$, $\C(m)$ is the extremal graph.
When $m>\binom{r+1}{2}$, any graph all of whose degrees are equal to $r$ except from possibly one is extremal by Corollary~\ref{cor:majdeg_kar}.

\bibliographystyle{abbrv}
\bibliography{bib_entropy}

\end{document}